\documentclass[a4paper, 11pt]{amsart}
\usepackage{amsmath,amsthm,amssymb,amscd,amsfonts}

\usepackage[backref=page]{hyperref}

\newtheorem{theorem}{Theorem}[section]
\newtheorem{proposition}[theorem]{Proposition}
\newtheorem{lemma}[theorem]{Lemma}
\newtheorem{corollary}[theorem]{Corollary}
\theoremstyle{remark}
\newtheorem*{remark}{Remark}

\def\MU{\mathit{MU}}
\def\Z{\mathbb Z}
\def\C{\mathbb C}
\def\P{\mathcal P}

\textwidth=15cm
\hoffset=-1.2cm

\begin{document}

\title{Landweber exactness of the formal group law in $c_1$-spherical bordism}

\author{G. S. Chernykh}
\date{}
\address{Faculty of Mathematics and Mechanics, Moscow
State University, Russia;\newline
Steklov Mathematical Institute of the Russian Academy of Sciences, Moscow, Russia;\newline
National Research University Higher School of Economics, Russia}
\email{aaa057721@gmail.com}

\thanks{This work was performed at the Steklov International Mathematical Center and supported by the Ministry of Science and Higher Education of the Russian Federation (agreement no. 075-15-2022-265). The author is supported by Theoretical Physics and Mathematics Advancement Foundation ``BASIS''. The article was prepared within the framework of the HSE University Basic Research Program.}

\maketitle

\begin{abstract}
We describe the structure of the coefficient ring $W^*(pt)=\varOmega_W^*$ of the $c_1$-spherical bordism theory for an arbitrary $SU$-bilinear multiplication. We prove that for any $SU$-bilinear multiplication the formal group of the theory $W^*$ is Landweber exact. Also we show that after inverting the set $\P$ of Fermat primes there exists a complex orientation of the localized theory $W^*[\P^{-1}]$ such that the coefficients of the corresponding formal group law generate the whole coefficient ring $\varOmega_W^*[\P^{-1}]$.
\end{abstract}

\section*{Introduction}

Special unitary bordism, or $SU$-bordism, is the bordism theory of stably complex manifolds with a special unitary structure, where an $SU$-structure on a manifold $M$ is defined by a reduction of the structure group of the stable tangent bundle to the group $SU(N)$ (or, equivalently, by a trivialization of the one-dimensional complex bundle $\det TM$). Details of the construction of the $SU$-bordism theory and a description of its coefficient ring $\varOmega^{SU}_*$ can be found in ~\cite{novi67,ston68,c-l-p19}.

All known calculations of the coefficient ring $\varOmega^{SU}_*$ involve the $c_1$-spherical bordism groups $\varOmega^W_*$, which sit between $\varOmega^{SU}_*$ and complex bordism groups $\varOmega^U_*$ (of manifolds with a stably complex structure).

Originally, the groups $\varOmega_*^W$ appeared in the work of Conner and Floyd \cite{co-fl66} as the kernel of a certain operation $\Delta$ in complex bordism (see also \cite{ston68, c-l-p19}), and exactly in this form the groups $\varOmega_*^W$ arise in the calculation of the special unitary bordism groups $\varOmega_*^{SU}$ via the Adams--Novikov spectral sequence \cite{novi67, c-l-p19}.

The groups $\varOmega^W_*$ are the coefficient groups of the $c_1$-spherical bordism theory $W_*$ introduced by Stong~\cite{ston68}. Namely, a $c_1$-spherical stably complex structure on a manifold $M$ is given by a reduction of the classifying map $M\to\C P^{\infty}$ of the bundle $\det TM$ to a map to the ``sphere'' $\C P^1\subset\C P^{\infty}$. (Note that a stably complex structure is an $SU$-structure if and only if the bundle $\det T M$ is trivial, that is, its classifying map is reduced to a map to the point ``$\C P^0$''.) Details of the definition and properties of the theory $W_*$ can be found in ~\cite{ston68,c-l-p19, c-p23}.

Complex orientations of the theory $W^*$ and the corresponding formal group laws were introduced in the work of V. M. Buchstaber~\cite{buch72}, and subsequently studied in detail in the work~\cite{c-p23}. The theory $W^*$ is a module over the $SU$-bordism theory, and all $SU$-bilinear multiplications on $W^*$ were described in~\cite{c-p23}. In this paper we describe the structure of the coefficient ring $W^*(pt)=\varOmega_W^*$ for an arbitrary $SU$-bilinear multiplication (Theorem \ref{Wkring}), prove the Landweber exactness of the formal group law in the theory of $W^*$ for an arbitrary $SU$-bilinear multiplication (Theorem \ref{Wland}), and also show that after inverting of Fermat primes there exists a complex orientation for which the coefficients of the formal group law generate the whole localized ring $\varOmega_W^*$. The latter result was stated in \cite{buch72}. We give a proof of this fact (see Theorem \ref{Fermat}).

The author is deeply grateful to Taras Panov for suggesting the problem, fruitful discussions and constant attention to the work.

\section{Preliminaries}

Denote by $m_k$ the greatest common divisor of the binomial coefficients ${k+1\choose i}$, $1 \le i \le k$. Then the following well known equality holds

\[
m_k =
\begin{cases}
p&\text{if $k+1=p^s$ for a prime $p$,}\\
1&\text{in other cases.}
\end{cases}
\]

Recall \cite{novi62} that the coefficient ring of complex cobordism has the form $\varOmega_U^*=\Z[a_1, a_2, \ldots]$, $|a_k|=-2k$, and the polynomial generators $a_i$ are characterized by the property that their higher $s$-numbers are given by $s_k(a_k)=\pm m_k$. We denote by $J$ the ideal of elements of nonzero degree in the ring $\varOmega_U^*$. Thus, $J^2$ is the ideal of decomposables.

The standard complex orientation of the complex cobordism theory $\MU^*$ corresponds to the formal group law $F(u,v) = u+v+\sum\alpha_{ij}u^i v^j$ in complex cobordism. This formal group law is the universal formal group law in the sense of Lazard\cite{quil69}, that is, for any formal group law $F_R$ over any ring $R$ there exists a unique ring homomorphism $\psi\colon\varOmega^*_U\to R$ mapping the formal group law $F$ to $F_R$. In this case we say that the homomorphism $\psi$ classifies the formal group law $F_R$.

\begin{lemma}\label{simple}
Let $F_R(u, v)$ be a graded formal group law over a non-positively graded ring $R$. Then there exist elements $r_k \in R^{-2k}$ such that 
$$F_R(u, v)=u+v+\sum\limits_{k\ge1}r_k\frac{((u+v)^{k+1}-u^{k+1}-v^{k+1})}{m_k} \mod J_R^2,$$
where $J_R =R^{<0}$ and $J_R^2$ is the ideal of decomposables.

The $n$-th power series corresponding to a formal group law $F_R(u,v)$ satisfies the equality 
\[
  [n]_R(u) =nu+\sum r_k \frac{(nu)^{k+1}-nu^{k+1}}{m_k} \mod {J_R^2}.
\] 
\end{lemma}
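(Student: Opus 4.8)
The plan is to prove both formulas by passing to the square-zero quotient $\bar R := R/J_R^2$, over which the defining identities of a formal group law collapse to the classical symmetric $2$-cocycle condition. Let $\bar F(u,v)$ denote the image of $F_R(u,v)$ in $\bar R[[u,v]]$; it is a graded formal group law over $\bar R$, and $\bar F(u,v)=u+v+\bar G(u,v)$ with $\bar G(u,0)=\bar G(0,v)=0$ (because $F_R(u,0)=u$ and $F_R(0,v)=v$) and all coefficients of $\bar G$ lying in the ideal $\bar J := J_R/J_R^2$, which squares to zero in $\bar R$ since $J_R=R^{<0}$. I would decompose $\bar G=\sum_{k\ge1}\bar P_k$ into homogeneous pieces, $\bar P_k(u,v)$ being the sum of the monomials of total degree $k+1$; as $F_R$ is graded, the coefficients of $\bar P_k$ lie in $\bar R^{-2k}$.

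The next step is to extract the cocycle condition. Substituting a power series with coefficients in $\bar J$ into another such series alters the output only by terms with coefficients in $\bar J^2=0$; feeding this into commutativity $F_R(u,v)=F_R(v,u)$ and associativity $F_R(F_R(u,v),w)=F_R(u,F_R(v,w))$ gives, in $\bar R[[u,v,w]]$, the exact identities
\[
\bar G(u,v)=\bar G(v,u),\qquad \bar G(u,v)+\bar G(u+v,w)=\bar G(v,w)+\bar G(u,v+w).
\]
Hence each $\bar P_k$ is a homogeneous symmetric $2$-cocycle of degree $k+1$ with coefficients in the abelian group $\bar R^{-2k}$. By Lazard's symmetric $2$-cocycle lemma, over any abelian group the cocycles of degree $k+1$ are exactly the multiples of $\frac{(u+v)^{k+1}-u^{k+1}-v^{k+1}}{m_k}$; thus $\bar P_k=\bar r_k\cdot\frac{(u+v)^{k+1}-u^{k+1}-v^{k+1}}{m_k}$ for some $\bar r_k\in\bar R^{-2k}$. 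Lifting each $\bar r_k$ to $r_k\in R^{-2k}$ and summing over $k$ yields the first formula. (To use only the integral case of the cocycle lemma one can instead take the classifying homomorphism $\psi\colon\varOmega_U^*\to R$ of $F_R$, which is graded and satisfies $\psi(J^2)\subseteq J_R^2$; since $(\varOmega_U^*/J^2)^{-2k}\cong\Z\cdot a_k$, the degree-$(k+1)$ part of $F\bmod J^2$ equals $\lambda_k a_k\cdot\frac{(u+v)^{k+1}-u^{k+1}-v^{k+1}}{m_k}$ for some $\lambda_k\in\Z$, and one sets $r_k:=\lambda_k\psi(a_k)$.)

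For the second formula, write $\beta_k(n):=\frac{(nu)^{k+1}-nu^{k+1}}{m_k}=\frac{n^{k+1}-n}{m_k}u^{k+1}$; the coefficient is an integer since $m_k\mid n^{k+1}-n$ (obvious when $m_k=1$, and a consequence of Fermat's little theorem when $k+1$ is a prime power). I would prove $[n]_R(u)\equiv nu+\sum_{k\ge1}r_k\beta_k(n)\pmod{J_R^2}$ by induction on $n$, from $[1]_R(u)=u$ (where $\beta_k(1)=0$) via the recursion $[n+1]_R(u)=F_R([n]_R(u),u)$. In the inductive step, $[n]_R(u)-nu$ and all the $r_k$ lie in $J_R$, so modulo $J_R^2$ one may replace the first argument $[n]_R(u)$ by $nu$ inside each $r_k\cdot\frac{(u+v)^{k+1}-u^{k+1}-v^{k+1}}{m_k}$-term of the first formula (and drop its $J_R^2$-valued error term). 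Collecting the coefficient of each $r_ku^{k+1}$ then reduces the claim to the elementary identity $\frac{n^{k+1}-n}{m_k}+\frac{(n+1)^{k+1}-n^{k+1}-1}{m_k}=\frac{(n+1)^{k+1}-(n+1)}{m_k}$, which is $\beta_k(n+1)$; the case of negative $n$ follows identically using the formal inverse series $[-1]_R$.

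The only nonformal ingredient is the symmetric $2$-cocycle lemma invoked in the second paragraph; everything else is bookkeeping in $R/J_R^2$. The point that needs care is precisely that $\bar P_k$ is an \emph{exact} symmetric cocycle rather than a cocycle ``up to further decomposables'' — which is why one must pass to $\bar R$ before applying the lemma, and then lift the resulting $r_k$ back to $R$.
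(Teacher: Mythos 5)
Your argument is correct, but it takes a genuinely different route from the paper. The paper's proof is a two-line pushforward: it quotes Adams' formula for the universal formal group law modulo decomposables, $F(u,v)=u+v+\sum_k a_k\frac{(u+v)^{k+1}-u^{k+1}-v^{k+1}}{m_k} \bmod J^2$, together with the resulting expression for $[n](u)$, and applies the classifying homomorphism $\psi\colon\varOmega_U^*\to R$, using $\psi(J^2)\subseteq J_R^2$ and setting $r_k=\psi(a_k)$ --- which is exactly the alternative you sketch in your parenthesis. Your main argument instead works intrinsically in the square-zero quotient $R/J_R^2$: you derive the symmetric $2$-cocycle condition for $\bar G$ from commutativity and associativity (the perturbation estimate you use --- changing an argument of $\bar G$ by a series with coefficients in $\bar J$ only affects the result by $\bar J^2=0$ --- is the right one), split by homogeneous degree using the grading, invoke Lazard's symmetric $2$-cocycle lemma degreewise to get $\bar P_k=\bar r_k\,\frac{(u+v)^{k+1}-u^{k+1}-v^{k+1}}{m_k}$, and lift. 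For the $n$-series you then run an induction via $[n+1](u)=F_R([n](u),u)$, again exploiting that $[n](u)-nu$ has coefficients in $J_R$, with the telescoping identity $\frac{n^{k+1}-n}{m_k}+\frac{(n+1)^{k+1}-n^{k+1}-1}{m_k}=\frac{(n+1)^{k+1}-(n+1)}{m_k}$ (and the inverse series for negative $n$, which works as you say). What each approach buys: the paper's is shorter because it outsources everything to the known structure of $\varOmega_U^*$ modulo decomposables; yours is self-contained up to the cocycle lemma (which is what underlies Adams' formula in the first place), does not use universality of $\MU^*$ at all, and produces the $n$-series formula without quoting its universal counterpart. Both are complete proofs.
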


\begin{proof}
There is the following formula for the universal formal group law modulo decomposables:
\[
  F(u, v) = u+v+\sum_{k \ge 1} a_k \frac{(u+v)^{k+1} - u^{k+1} - v^{k+1}}{m_k} \mod {J^2},
\]
where $a_k$ is a polynomial generator of the ring $\varOmega^*_U$ with the $s$-number $s_k(a_k)=-m_k$ (see~\cite{adam74}). Then for the $n$th power we get
\[
  [n](u) = nu + \sum_{k \ge 1} a_k \frac{(nu)^{k+1} - nu^{k+1}}{m_k} \mod{J^2}.
\]
The required formulas are obtained by applying the homomorphism $\psi\colon\varOmega^*_U\to R^*$ classifying~$F_R$ to the above and observing that $\psi(J^2) \subset J_R^2$. Note that $r_k=\psi(a_k)$.
\end{proof}

There is a natural inclusion $W^*\hookrightarrow\MU^*$ of the $c_1$-spherical cobordism theory $W^*$ into complex cobordism. Furthermore, $W^*$ splits as a direct summand by means of natural projections, for example, the classical Stong projection $\pi_0\colon\MU^*\to W^*$. On the coefficients of the theories, the Stong projection sends the bordism class of a stably complex manifold $M$ to the bordism class of the submanifold $N$ in $M\times\C P^1$ dual to the one-dimensional complex bundle $\det T M\otimes\mathcal O(1)$. Using the Stong projection, one can define a multiplication on $W^*$ by the formula $a*b = \pi_0(ab)$ (note that the subgroup $W^*$ is not closed under the standard product $ab$ in $\MU^*$). We have
\begin{equation}\label{mult}
a*b=ab+2\alpha_{12}\partial a\partial b,
\end{equation} where $\alpha_{12}$ is the corresponding coefficient of the formal group law in complex cobordism, and $\partial$ is the operation in complex cobordism acting on the coefficients by sending the bordism class of a manifold $M$ to the bordism class of the submanifold $N$ dual to $\det TM$. For details, see \cite{ston68}, \cite{c-p23}.

With respect to the multiplication $a*b=\pi_0(ab)$ defined by the Stong projection $\pi_0$, the ring of coefficients of the theory $W^*$ has the form
\begin{equation}\label{W}
\varOmega_W^*=\Z[x_1, x_{k \ge 3}], \, |x_k|=-2k,
\end{equation} where the polynomial generators are characterized by the property that $s_k(x_k)=\pm m_k m_{k-1}$, and can be chosen such that $\partial(x_{2i})=x_{2i-1}$ for $i>1$ (and hence $\partial(x_{2i-1})=0$) \cite[Chapter X]{ston68}, \cite[Theorem 6.10]{c-l-p19}. Note that $x_1=a_1 = \pm [\C P^1]$.

We have the forgetful homomorphism from $SU$-bordism to $c_1$-spherical bordism $\varOmega^*_{SU}\to \varOmega^*_W$. After inverting $2$ this homomorphism becomes injective and, moreover,
\begin{equation}\label{W1/2}
\varOmega^*_W[1/2]=\varOmega^*_{SU}[1/2] \oplus x_1 \varOmega^*_{SU}[1/2].
\end{equation}
Furthermore,
\begin{equation}\label{SU1/2}
\varOmega^*_{SU}[1/2]=\Z[1/2][y_2, y_3, \ldots].
\end{equation}
Here one can take $y_2=x_1 *x_1$  and $y_i=x_i - \frac{1}{2}x_1\partial(x_{i})$ for $i>2$ (see \cite{ston68}, \cite{c-l-p19}).
As a result, we get
\begin{equation*}\label{W1/2p}
\varOmega^*_W[1/2]=\Z[1/2][x_1, y_2, y_3, \ldots]/(x_1 * x_1 = y_2).
\end{equation*}

Recall that for a given (graded) formal group law $F_R$ over a graded ring $R_*$ and a fixed prime number $p$, the coefficient for $u^{p^n}$ in the power series $[p]_{F_R}(u)$ is traditionally denoted by $v_n$ (this notation does not reflect the prime~$p$). Then we have the following

\begin{theorem}[Landweber Exact Functor Theorem {\cite{lan76}}]\label{Land}
Suppose that for any prime number $p$ the sequence $(p, v_1, v_2, \ldots)$ is regular in the ring $R_*$, that is, for any $n$ multiplication by the element $v_n$ is injective in the quotient ring $R_*/(p, v_1, \ldots, v_{n-1})$ (including the multiplication by $p$ in~$R_*$). Then the functor $\MU_*(-)\otimes_{\varOmega^U_*}R_*$ is exact, so it defines a homology theory. Here the structure of an $\varOmega^U_*$-module on $R_*$ is given by the ring homomorphism $\varOmega^U_*\to R_*$ classifying the given formal group law ~$F_R$. Similarly, the formula $\MU^*(-)\otimes_{\varOmega_U^*}R^*$ defines a ring cohomology theory on finite cell complexes.
\end{theorem}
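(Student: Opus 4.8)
The plan is to reduce the statement, by standard homological algebra, to one vanishing result for $\mathrm{Tor}$, and then to extract that result from the classification of invariant prime ideals in $\varOmega_U^*$. First I would note that $\MU_*(-)\otimes_{\varOmega_U^*}R_*$ automatically inherits homotopy invariance, excision, and the wedge axiom from $\MU_*$, since $-\otimes_{\varOmega_U^*}R_*$ preserves isomorphisms and commutes with direct sums; the only Eilenberg--Steenrod axiom in question is exactness of the long exact sequence of a pair. For a \emph{finite} CW pair $(Y,X)$, the long exact sequence of $\MU_*$ decomposes into short exact sequences $0\to A\to B\to C\to 0$ of $\MU_*\MU$-comodules, and every comodule occurring ($\MU_*(X)$, $\MU_*(Y)$, $\MU_*(Y/X)$, and the images of the maps among them) is finitely presented, because $\MU_*\MU$ is flat over $\varOmega_U^*$ and the category of finitely presented comodules is abelian. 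Thus exactness will follow once I know that $\mathrm{Tor}_1^{\varOmega_U^*}(C,R_*)=0$ for every finitely presented $\MU_*\MU$-comodule $C$; the passage from finite to arbitrary complexes is then handled by writing $\MU_*(X)=\operatorname{colim}_\alpha\MU_*(X_\alpha)$ over finite subcomplexes and using that filtered colimits are exact and commute with $\otimes_{\varOmega_U^*}R_*$, while the cohomological assertion on finite complexes follows by Spanier--Whitehead duality from the homological one (the ring structure being transported from the multiplicativity of $\MU^*$).

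The key algebraic input I would import is \emph{Landweber's filtration theorem} \cite{lan76}: every finitely presented $\MU_*\MU$-comodule $M$ admits a finite filtration $0=M_0\subset M_1\subset\cdots\subset M_s=M$ by subcomodules whose successive quotients $M_j/M_{j-1}$ are, up to a degree shift, isomorphic to $\varOmega_U^*/I_{p_j,n_j}$, where $p_j$ is a prime, $n_j\ge 0$, and $I_{p,n}=(p,v_1,\dots,v_{n-1})$ with the convention $I_{p,0}=(0)$. This in turn rests on Landweber's identification of the homogeneous invariant prime ideals of $\varOmega_U^*$ as precisely the $I_{p,n}$. Granting it, a dévissage along the short exact sequences of the filtration, using the long exact sequence for $\mathrm{Tor}^{\varOmega_U^*}_*(-,R_*)$, reduces the vanishing $\mathrm{Tor}_1^{\varOmega_U^*}(C,R_*)=0$ for all finitely presented $C$ to the single family $\mathrm{Tor}_1^{\varOmega_U^*}\!\bigl(\varOmega_U^*/I_{p,n},R_*\bigr)=0$ for $p$ prime and $n\ge 0$.

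This family I would establish by induction on $n$. For $n=0$ the module $\varOmega_U^*/I_{p,0}=\varOmega_U^*$ is free, so all its higher $\mathrm{Tor}$ vanish. Since $(p,v_1,v_2,\dots)$ is a regular sequence in $\varOmega_U^*$, the element $v_n$ (reading $v_0=p$) is a nonzerodivisor modulo $I_{p,n}$, so there is a short exact sequence of $\varOmega_U^*$-modules
\[ 0\longrightarrow \varOmega_U^*/I_{p,n}\xrightarrow{\,v_n\,}\varOmega_U^*/I_{p,n}\longrightarrow \varOmega_U^*/I_{p,n+1}\longrightarrow 0 . \]
Applying $-\otimes_{\varOmega_U^*}R_*$, using $\varOmega_U^*/I_{p,n}\otimes_{\varOmega_U^*}R_*=R_*/I_{p,n}R_*$ and the inductive hypothesis $\mathrm{Tor}_1^{\varOmega_U^*}(\varOmega_U^*/I_{p,n},R_*)=0$, the long exact $\mathrm{Tor}$-sequence gives an exact sequence
\[ 0\longrightarrow \mathrm{Tor}_1^{\varOmega_U^*}\!\bigl(\varOmega_U^*/I_{p,n+1},R_*\bigr)\longrightarrow R_*/I_{p,n}R_*\xrightarrow{\,v_n\,}R_*/I_{p,n}R_* . \]
Hence $\mathrm{Tor}_1^{\varOmega_U^*}(\varOmega_U^*/I_{p,n+1},R_*)$ is identified with the kernel of multiplication by $v_n$ on $R_*/I_{p,n}R_*=R_*/(p,v_1,\dots,v_{n-1})R_*$, and this kernel vanishes precisely by the hypothesis of the theorem (the case $n=0$ being the injectivity of multiplication by $p$ on $R_*$). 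This closes the induction and completes the reduction.

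I expect the genuine obstacle to be the step I deliberately imported, namely Landweber's filtration theorem, equivalently the classification of the invariant prime ideals of $\varOmega_U^*$: this is the deep part, and making the argument self-contained would require developing the theory of $\MU_*\MU$-comodules (flatness of $\MU_*\MU$ over $\varOmega_U^*$, finiteness of $\MU_*$ on finite complexes, the abelian category of finitely presented comodules) together with the structure theory of formal modules underlying the classification. Everything else — the reduction to $\mathrm{Tor}_1$-vanishing, the dévissage, the inductive $\mathrm{Tor}$ computation, and the colimit argument for infinite complexes — is routine once that input is available.
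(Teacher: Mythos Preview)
The paper does not prove this statement: Theorem~\ref{Land} is quoted from Landweber's original paper~\cite{lan76} as a known result, with no proof given in the text. There is therefore nothing to compare your proposal against.

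That said, your sketch is the standard argument, essentially Landweber's own: reduce exactness to $\mathrm{Tor}_1$-vanishing on finitely presented $\MU_*\MU$-comodules, invoke the filtration theorem to reduce to the modules $\varOmega_U^*/I_{p,n}$, and handle those by induction on $n$ using regularity of $(p,v_1,v_2,\ldots)$ in both $\varOmega_U^*$ and $R_*$. Your identification of the filtration theorem (equivalently, the classification of invariant prime ideals) as the substantive input is exactly right; the rest is routine homological algebra, as you say.
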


A formal group law $F_R$ satisfying the conditions of the theorem above is called \emph{Landweber exact}.

\begin{corollary}
If the formal group law corresponding to a complex-oriented theory $h^*$ is Landweber exact, then there is a natural isomorphism of multiplicative cohomology theories $h^*(-) \simeq \MU^*(-) \otimes_{\varOmega_U^*} h^*(pt)$ on finite cell complexes, and an isomorphism of the corresponding homology theories.
\end{corollary}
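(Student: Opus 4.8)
The plan is to build a natural multiplicative transformation out of the complex orientation and then play two cohomology theories off against each other. Write $\varphi\colon\varOmega_U^*\to h^*(pt)$ for the ring homomorphism classifying the formal group law of $h^*$; by construction this is the $\varOmega_U^*$-module structure appearing both in the hypothesis of Theorem \ref{Land} and in the functor $\MU^*(-)\otimes_{\varOmega_U^*}h^*(pt)$. The chosen complex orientation, i.e. the class $x^h\in\tilde h^2(\C P^\infty)$ restricting to a generator, also produces Thom classes for all complex vector bundles and hence, by Quillen's construction, a multiplicative natural transformation $\Theta\colon\MU^*(X)\to h^*(X)$ of (ring) cohomology theories on finite cell complexes, which on a point is exactly $\varphi$ and which is therefore $\varOmega_U^*$-linear. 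Multiplicativity and linearity let $\Theta$ factor through a natural multiplicative transformation
$$\bar\Theta\colon\MU^*(X)\otimes_{\varOmega_U^*}h^*(pt)\longrightarrow h^*(X).$$

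Next I would invoke the Landweber exactness hypothesis on $h^*$: by Theorem \ref{Land} the left-hand side $X\mapsto\MU^*(X)\otimes_{\varOmega_U^*}h^*(pt)$ is a cohomology theory on finite cell complexes, so $\bar\Theta$ is a morphism of cohomology theories. For $X=pt$ it is the canonical identification $\varOmega_U^*\otimes_{\varOmega_U^*}h^*(pt)\cong h^*(pt)$, hence an isomorphism; applying the suspension isomorphism it is an isomorphism for every sphere. A standard induction on the number of cells — attaching one cell at a time and comparing the long exact sequences of the resulting pairs via the five lemma — then shows $\bar\Theta$ is an isomorphism for every finite cell complex. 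This gives the cohomological statement.

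For homology one argues identically: the orientation gives a natural multiplicative transformation $\MU_*(X)\otimes_{\varOmega_U^*}h_*(pt)\to h_*(X)$, Theorem \ref{Land} makes the source a homology theory, it is an isomorphism on a point, hence (suspension, five lemma) on all finite complexes, and finally on arbitrary CW complexes since both sides commute with the filtered colimits over finite subcomplexes. The only point requiring real care is the construction of $\Theta$ together with the verification that the module structure it induces on $h^*(pt)$ is the one classifying the formal group law — that is, Quillen's theorem identifying the orientation-induced map $\MU\to h$ on coefficient rings with the classifying map of the formal group law; granting this (and the representability of $h^*$ by a homotopy-commutative ring spectrum, implicit in the phrase ``complex-oriented theory''), everything else is the routine comparison of cohomology theories on finite cell complexes.
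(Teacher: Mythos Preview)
The paper does not actually prove this corollary: it is stated without proof immediately after Theorem~\ref{Land} and treated as a standard consequence of Landweber's exact functor theorem. So there is nothing in the paper to compare your argument against line by line.

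That said, what you have written is exactly the standard proof of this folklore fact and is correct. The essential ingredients --- the multiplicative map $\Theta\colon\MU^*\to h^*$ coming from the orientation, Quillen's identification of $\Theta$ on coefficients with the classifying map of the formal group law, the factoring through the tensor product, and the comparison-of-cohomology-theories argument via the five lemma --- are all in place and correctly assembled. Your closing caveat about needing Quillen's theorem to match the two $\varOmega_U^*$-module structures is well placed: that is indeed the only nontrivial input beyond Theorem~\ref{Land} itself.
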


The following fact is well known.

\begin{proposition}
Let $F_1$ and $F_2$ be two formal group laws over a ring $R$, and let $g \colon F_1 \to F_2$ be an isomorphism between them given by a power series $g(u) = \lambda u + \cdots \in R[[u]],$ where $\lambda \in R^\times$, $F_2(u, v) = g (F_1 (g^{-1}(u), g^{-1}(v)))$ and $[ p]_2(u) = g([p]_1(g^{-1}(u)))$. Denote by $v_n$ and $v_n'$ the coefficients for $u^{p^n}$ in the series $[p]_1(u)$ and $[p]_2(u)$, respectively.

Then $v'_n = s v_n \mod{(p, v_1, \ldots, v_{n-1})}, \, s \in R^\times$. In particular, the ideal $(p, v_1, \ldots, v_n)$ depends only on the isomorphism class of the formal group law.
\end{proposition}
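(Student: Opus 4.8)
The plan is to establish the congruence by induction on $n$, carrying out the whole computation modulo the ideal $I_n:=(p,v_1,\dots,v_{n-1})$. Assume inductively that $v_i'\equiv s_iv_i\pmod{(p,v_1,\dots,v_{i-1})}$ with $s_i\in R^{\times}$ for every $i<n$ (the case $n=0$ is trivial, since $v_0=v_0'=p$). A short induction on $i\le n-1$, using that each $s_i$ is a unit and that $p$ lies in every ideal involved, shows that $(p,v_1,\dots,v_i)=(p,v_1',\dots,v_i')$; consequently the two $p$-series $[p]_1(u)$ and $[p]_2(u)$ may be reduced modulo one and the same ideal $I_n$, and the classes of $v_n$ and $v_n'$ in $\overline{R}:=R/I_n$ are well defined.

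The computation itself rests on the classical structural fact that, for any formal group law $F$ over any ring,
\[
  [p]_F(u)\;\equiv\;v_n u^{p^n}\pmod{(I_n,\,u^{p^n+1})},
\]
i.e.\ in $\overline{R}[[u]]$ all coefficients of $[p]_F(u)$ in degrees $1,\dots,p^n-1$ vanish and the coefficient of $u^{p^n}$ is the class of $v_n$. Applying this to $F_1$ and to $F_2$ --- legitimate because $I_n=(p,v_1,\dots,v_{n-1})=(p,v_1',\dots,v_{n-1}')$ --- and substituting $g^{-1}(u)=\lambda^{-1}u+\cdots$ and $g(z)=\lambda z+\cdots$ into the conjugation identity $[p]_2(u)=g\bigl([p]_1(g^{-1}(u))\bigr)$, one finds in $\overline{R}[[u]]$ that $[p]_1(g^{-1}(u))$ has vanishing coefficients in degrees $1,\dots,p^n-1$ and $u^{p^n}$-coefficient $v_n\lambda^{-p^n}$, and that applying $g$ multiplies this coefficient by $\lambda$ without contributing anything in degrees $\le p^n$. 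Hence the $u^{p^n}$-coefficient of $[p]_2(u)$ equals $\lambda^{1-p^n}v_n$ in $\overline{R}$; as that coefficient is also the class of $v_n'$, we obtain $v_n'\equiv\lambda^{1-p^n}v_n\pmod{I_n}$, so one may take $s=\lambda^{1-p^n}\in R^{\times}$. The final assertion, that $(p,v_1,\dots,v_n)$ depends only on the isomorphism class, then follows by running the induction on $i$ one step further, up to $i=n$.

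The step that genuinely requires care --- and which I would isolate beforehand as a lemma (it is also standard, cf.\ \cite{lan76}) --- is the displayed congruence for $[p]_F(u)$, which I would prove by a separate induction on $n$. Over $\overline{R}$ one has $p=0$; differentiating the homomorphism identity $[p]_F(F(u,v))=F([p]_F(u),[p]_F(v))$ with respect to $v$ and setting $v=0$ gives $[p]_F'(u)\,F_2(u,0)=p\,F_2([p]_F(u),0)$ with $F_2=\partial F/\partial v$, and since the right-hand side vanishes in $\overline{R}$ while $F_2(u,0)$ is a unit power series, $[p]_F'(u)=0$; hence $[p]_F(u)$ is a power series in $u^{p}$, say $[p]_F(u)=h(u^p)$, where $h$, like $[p]_F$, is a homomorphism of formal group laws, with linear coefficient $v_1$. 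If $v_1$ vanishes in $\overline{R}$ the same computation applies to $h$, making it likewise a power series in the $p$-th power of its variable, and so on; since $v_1,\dots,v_{n-1}$ all vanish in $\overline{R}$, after $n$ such steps one reaches $[p]_F(u)=\widetilde{h}(u^{p^n})$ with $\widetilde{h}(0)=0$ and linear coefficient $v_n$, which is precisely the claim. I expect this differential argument, together with the bookkeeping needed to keep the two chains of ideals $(p,v_1,\dots,v_i)$ and $(p,v_1',\dots,v_i')$ identified at every stage, to be the only point where more than formal manipulation of power series, ideals, and units is involved.
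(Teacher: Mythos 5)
Your proof is correct and follows essentially the same route as the paper: induction on $n$, identification of the ideals $(p,v_1,\dots,v_{n-1})=(p,v'_1,\dots,v'_{n-1})$, the structural fact that $[p]_F(u)\equiv v_nu^{p^n}+\cdots$ modulo that ideal, and the conjugation computation giving $s=\lambda^{1-p^n}$. The only difference is that the paper simply cites this structural fact (Ravenel, Lemma A2.2.6), while you sketch its standard proof via the differentiation/Frobenius argument, which is a fine (if unnecessary) addition.
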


\begin{proof}
The proof is by induction on $n$. If $n=0$, the ideal in question is $(p)$ and the statement is trivial. Let us prove that the equality $(p, v_1, \ldots, v_{n-1}) = (p, v'_1, \ldots, v'_{n-1})$ implies that $v'_n = s v_n \mod{(p, v_1, \ldots, v_{n-1})}, \, s \in R^\times$.

It is known that if for a formal group law $F_R$ the equalities $0=p=\cdots=v_{n-1}$ hold in~$R$, then $[p]_R(u) = \varphi(u^{p ^n}),\,$ where $\varphi(u) = v_n u + \cdots$ is a power series in $u$ (see, for example, \cite[Lemma A2.2.6]{rav86}).

Therefore, reducing the formal group laws modulo $(p, v_1, \ldots, v_{n-1})$ we obtain $[p]_1(u) = v_n u^{p^n} + \cdots$ and $[p]_2(u) = v'_n u^{p^n} + \cdots$. On the other hand, $[p]_2(u) = g([p]_1(g^{-1}(u)))=\lambda \lambda^{-p^n} v_n u^{p^n} + \cdots$. Thus, $v'_n = \lambda^{1-p^n} v_n \mod {(p, v_1, \ldots, v_{n-1})}$.
\end{proof}

\begin{corollary}
The Landweber exactness of the formal group law corresponding to a complex-oriented theory does not depend on a complex orientation.
\end{corollary}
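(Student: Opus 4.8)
The plan is to deduce this corollary directly from the Proposition just established, the only extra ingredient being the standard fact that a change of complex orientation changes the associated formal group law by an isomorphism of the type considered there. So first I would recall that for a complex-oriented cohomology theory $h^*$, any two complex orientations $x,x'$, viewed as classes in $h^2(\C P^\infty)$, are related by $x'=g(x)$ for a unique power series $g(u)=\lambda u+\cdots\in R[[u]]$ with $R=h^*(pt)$, and that restricting to $h^2(\C P^1)$ forces $\lambda$ to be a unit (indeed $\lambda=1$ if the orientations are normalized on $\C P^1$). Pulling back the class on $\C P^\infty\times\C P^\infty$ via the two orientations yields two formal group laws $F_1,F_2$ over the \emph{same} ring $R$, and by construction $F_2(u,v)=g(F_1(g^{-1}(u),g^{-1}(v)))$ and $[p]_2(u)=g([p]_1(g^{-1}(u)))$ — exactly the hypotheses of the Proposition.

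Next I would invoke the Proposition to conclude that, for every prime $p$ and every $n$, the ideals $(p,v_1,\ldots,v_n)$ computed from the two orientations coincide, and moreover $v'_n\equiv s\,v_n$ modulo $(p,v_1,\ldots,v_{n-1})$ for some unit $s\in R^\times$. Finally I would observe that the Landweber condition is phrased purely in these terms: regularity of the sequence $(p,v_1,v_2,\ldots)$ means that for each $n$ multiplication by $v_n$ is injective on $R/(p,v_1,\ldots,v_{n-1})$. Since the ideal $(p,v_1,\ldots,v_{n-1})=(p,v'_1,\ldots,v'_{n-1})$ is the same for both orientations, and $v_n$ and $v'_n$ differ by a unit in the quotient by this ideal, multiplication by $v_n$ on that quotient is injective if and only if multiplication by $v'_n$ is. Hence the sequence $(p,v_1,v_2,\ldots)$ is regular in $R$ for one orientation precisely when it is for the other, so Landweber exactness is independent of the choice of complex orientation.

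I do not expect any real obstacle here once the Proposition is available. The single point needing a sentence of justification is the first one — that passing between complex orientations replaces the formal group law by an isomorphic one with unit leading coefficient — which is a standard fact about complex-oriented theories and which I would simply cite (e.g. Adams or Ravenel) rather than reprove.
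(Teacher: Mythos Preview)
Your proposal is correct and is exactly the argument the paper has in mind: the corollary is stated without proof immediately after the Proposition, being an immediate consequence of it together with the standard fact that changing complex orientation replaces the formal group law by an isomorphic one with invertible leading coefficient. You have simply written out what the paper leaves implicit.
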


\section{Main results}

The theory $W^*$ is a module over the $SU$-bordism theory $MSU^*$. By~\cite[Theorem 2.21]{c-p23}, an arbitrary $SU$-bilinear multiplication on $W^*$ has the form
\begin{equation}\label{mult2}
a\tilde * b = ab + (2 \alpha_{12} + w) \partial a \partial b,
\end{equation} where $w$ is an arbitrary element of $\varOmega^{-4}_W$.
For the standard multiplication $a*b = \pi_0(ab)$ given by the Stong projection $\pi_0$, we have $w=0$. If the multiplication $a\tilde*b=\pi(ab)$ is defined by an arbitrary $SU$-linear projection $\pi \colon \MU^* \to W^*$, then the element $w$ is divided by $2$.

By \eqref{W} we have $\varOmega^{-4}_W = \Z \langle x_1 * x_1 \rangle$, hence  the element $w$ in~\eqref{mult2} has the form $q x_1 * x_1$, $q \in \Z$. Then an $SU$-bilinear multiplication takes the form
\begin{equation}\label{multw}
a *_q b = ab + (2 \alpha_{12} + q x_1 * x_1)\partial a \partial b = a*b + q x_1 * x_1 \partial a \partial b.
\end{equation}
In this notation, the standard multiplication is $a *_0 b$.

\begin{theorem}\label{Wkring}
With respect to the multiplication $*_q$, the ring $\varOmega^*_W$ has the form 
\begin{equation*}\label{tW1/2}
(\varOmega^*_W, *_q) = \Z[x_1, x_2, x_3, \ldots]/(x_1*_qx_1=(4q+1)x_2).
\end{equation*}
In particular, for any $SU$-bilinear multiplication of $a *_q b$, except for the standard one $a * b = ab + 2 \alpha_{12} \partial a \partial b$ (defined by the Stong projection $\pi_0$), the ring $\varOmega^*_W$ is not polynomial.
\end{theorem}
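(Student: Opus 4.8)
The plan is the following. First I would fix notation: set $x_2:=x_1*x_1$ (the standard Stong square of $x_1=\pm[\C P^1]$), so that $\varOmega_W^{-4}=\Z\langle x_2\rangle$ by \eqref{W}, and keep the standard polynomial generators $x_1,x_3,x_4,\dots$ of $(\varOmega_W^*,*)$. The first step is the identity $x_1*_qx_1=(4q+1)x_2$: by \eqref{multw} one has $x_1*_qx_1=x_1*x_1+q\,(x_1*x_1)\,\partial x_1\,\partial x_1$, and $\partial x_1=\pm2$ in $\MU^0=\Z$ since the submanifold of $\C P^1$ dual to $\det T\C P^1=\mathcal O(2)$ consists of two points, whence $(\partial x_1)^2=4$ and $x_1*_qx_1=(4q+1)x_2$. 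Consequently the unique ring homomorphism from the graded polynomial ring $\Z[t_1,t_2,t_3,\dots]$, $|t_k|=-2k$, to $(\varOmega_W^*,*_q)$ with $t_k\mapsto x_k$ annihilates $t_1^2-(4q+1)t_2$, so it descends to a graded ring homomorphism
\[
\bar\varphi\colon\ \Z[t_1,t_2,t_3,\dots]\big/\bigl(t_1^2-(4q+1)t_2\bigr)\ \longrightarrow\ (\varOmega_W^*,*_q),\qquad t_k\mapsto x_k ,
\]
and the theorem asserts that $\bar\varphi$ is an isomorphism.

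Next I would reduce this to surjectivity of $\bar\varphi$. A straightforward check (reduce powers of $t_1$ via $t_1^2=(4q+1)t_2$; test independence after the rational substitution $t_2\mapsto t_1^2/(4q+1)$) shows that $\Z[t_1,t_2,\dots]/(t_1^2-(4q+1)t_2)$ is a free graded abelian group with basis the reduced monomials $t_1^{\varepsilon}t_2^{a_2}t_3^{a_3}\cdots$, $\varepsilon\in\{0,1\}$. Counting these in degree $-2n$ and matching them via $(\varepsilon,a_2,a_3,\dots)\leftrightarrow(\varepsilon+2a_2,a_3,a_4,\dots)$ with the monomial basis of $\varOmega_W^{-2n}=\Z[x_1,x_3,x_4,\dots]^{-2n}$ from \eqref{W}, one sees that the source and the target of $\bar\varphi$ are, in each degree, free abelian groups of the same finite rank (the underlying graded group of $\varOmega_W^*$ does not depend on the choice of multiplication). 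Since a surjection of finitely generated free abelian groups of equal rank is an isomorphism, it suffices to prove that $x_1,x_2,x_3,\dots$ generate $(\varOmega_W^*,*_q)$ as a ring.

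For the surjectivity, the argument away from $2$ is short: since $*_q$ is $SU$-bilinear, \eqref{W1/2} and \eqref{SU1/2} exhibit $(\varOmega_W^*[1/2],*_q)$ as generated over $\varOmega_{SU}^*[1/2]=\Z[1/2][y_2,y_3,\dots]$ by $x_1$, and each $y_k$ with $k>2$ equals $x_k-\tfrac12x_1\partial x_k$ with $\partial x_k\in\{0\}\cup\{x_3,x_5,x_7,\dots\}$, so $x_1,x_2,x_3,\dots$ already generate $(\varOmega_W^*[1/2],*_q)$. For the integral statement I would induct on the degree. Since $(\varOmega_W^*,*)=\Z[x_1,x_3,x_4,\dots]$, every element of $\varOmega_W^{-2n}$ is a $\Z$-combination of $*$-monomials $M=x_{i_1}*\cdots*x_{i_r}$ with $i_\ell\in\{1,3,4,\dots\}$, and it is enough to express each such $M$ through $*_q$-products of the $x_k$. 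Writing $M=x_{i_1}*M_1$ with $M_1=x_{i_2}*\cdots*x_{i_r}$ and passing from $*$ to $*_q$, formula \eqref{multw} gives
\[
M\ =\ x_{i_1}*_qM_1\ -\ q\,x_2\cdot\partial x_{i_1}\cdot\partial M_1 ,
\]
the last product being taken in $\MU^*$. The correction term $q\,x_2\cdot\partial x_{i_1}\cdot\partial M_1$ lies in $\varOmega_W^*$: it equals $(x_{i_1}*_qM_1)-M$, and $\varOmega_W^*$ is a torsion-free direct summand of $\MU^*$. The key point is that this correction term is strictly simpler than $M$ — it carries the factor $x_2$, and $\partial$ strictly reduces the number of factors on the generators on which it does not vanish ($\partial x_1=\pm2$ is a scalar, $\partial x_{2i-1}=0$, $\partial x_{2i}=x_{2i-1}$) — so that a nested induction (on the degree, and within a fixed degree on the number of $*$-factors, using the structure \eqref{W} of $(\varOmega_W^*,*)$ and the Leibniz-type formula for $\partial$) puts the correction term, hence $M$, in the subring generated by $x_1,x_2,x_3,\dots$. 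I expect the bookkeeping that makes ``strictly simpler'' precise, so that the correction terms are genuinely covered by the inductive hypothesis, to be the main technical obstacle.

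Finally, the ``in particular'' clause follows at once. For $q\neq0$ we have $|4q+1|\ge3$, so some prime $p$ divides $4q+1$; then $(\varOmega_W^*/p,*_q)\cong\mathbb F_p[x_1,x_2,\dots]/(x_1^2)$ contains the nonzero nilpotent $x_1$, while a polynomial ring over $\Z$ remains reduced modulo every prime — so $(\varOmega_W^*,*_q)$ cannot be polynomial (equivalently, its module of indecomposables has $\Z/(4q+1)$-torsion in degree $-4$). For $q=0$ the relation reads $x_1*x_1=x_2$, and eliminating $x_2$ recovers the polynomial ring of \eqref{W}.
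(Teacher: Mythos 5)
Your reduction of the theorem to surjectivity of $\bar\varphi$ is sound (the source is free with the reduced-monomial basis, the ranks match those of $\varOmega_W^{-2n}$ from \eqref{W}, and a surjection of finitely generated free abelian groups of equal rank is bijective), the identity $x_1*_qx_1=(4q+1)x_2$, the $\Z[1/2]$-surjectivity via \eqref{W1/2}--\eqref{SU1/2}, and the ``in particular'' clause (non-reducedness of $\varOmega^*_W/p$ for $p\mid 4q+1$) are all fine. But the crucial integral (equivalently $2$-primary) surjectivity is exactly where your argument has a genuine gap, and it is the step you yourself flag. Your proposed well-founded measure --- within a fixed degree, the number of $*$-factors of a monomial --- does not decrease on the correction term. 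Indeed, $q\,x_2\,\partial x_{i_1}\,\partial M_1$ sits in the \emph{same} degree as $M$ (each $\partial$ raises degree by $2$, and $x_2$ lowers it by $4$), so the outer induction on degree does not apply to it; and when one rewrites it as a $\Z$-combination of $*$-monomials in $x_1,x_3,x_4,\dots$, the factor $x_2=x_1*x_1$ already contributes two factors, while the Leibniz-type expansion of $\partial M_1$ (whose correction terms insert extra factors of $x_1$ alongside the differentiated factors) produces monomials with at least as many factors as $M_1$. So the correction term typically has \emph{more} $*$-factors than $M$, not fewer, and the nested induction as described does not terminate; the ``bookkeeping'' you defer is not a technicality but the missing core of the proof at the prime $2$.

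The scheme can be repaired, but it needs different ingredients than factor counting: for instance, using that $\partial^2=0$, that $\partial$ and products of $\partial$-closed elements do not lead out of $\varOmega^*_W$, and that on $\partial$-closed elements of $\varOmega^*_W$ the products $\cdot$, $*$ and $*_q$ all coincide, the correction term becomes $q\,x_2*_q\bigl(\partial x_{i_1}\cdot\partial M_1\bigr)$ with the second factor of strictly smaller dimension, so induction on dimension alone suffices --- but each of these facts must be stated and justified, and none appears in your sketch. The paper sidesteps the issue altogether: it proves that $\varphi\otimes\Z/2$ and $\varphi\otimes\Z[1/2]$ are isomorphisms, and for the prime $2$ it uses that $\partial x_1=2$ forces $x_1*_qa=x_1*a$ mod $2$, so the $x_1$-adic filtrations of $\Z/2[x_1,x_3,x_4,\dots]$ and of $(\varOmega^*_W/2,*_q)$ can be compared on associated graded quotients, where $*_q$-monomials and $*$-monomials agree; either that argument or the indicated strengthening of your induction is needed to close the gap.
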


\begin{proof}

Consider the elements $x_i \in \varOmega^*_W, \, i \ne 2$ from \eqref{W} with $x_1=[\C P^1]$ and set $x_2 = x_1 * x_1$. Then~\eqref{multw} implies $x_1 *_q x_1 = x_1 * x_1 + 4q x_1 * x_1 = (4q+1)x_2$, since $\partial x_1 = 2$. Therefore, we have a ring homomorphism
\begin{equation}\label{homo}
\varphi \colon \Z[x_1, x_2, x_3, \ldots]/(x_1*_qx_1=(4 q+1)x_2) \to (\varOmega^*_W, *_q).
\end{equation}
We claim that this is an isomorphism. To see this, first note that  there is an isomorphism of abelian groups
$$ \Z[x_1, x_2, x_3, \ldots]/(x_1*_qx_1=(4q+1)x_2) = \Z[x_2, x_3, \ldots] \oplus x_1 \Z [x_2, x_3, \ldots]$$

Since the rings in \eqref{homo} are graded, and the graded components are free abelian group of finite rank, it suffices to prove that $\varphi\otimes \Z/(2)$ and $\varphi\otimes \Z[1/2]$ are isomorphisms.

\smallskip
\textit{1) $\varphi \otimes \Z/(2)$ is an isomorphism}
\smallskip

By reducing modulo $2$ we obtain 
$$
  \Z/(2)[x_1, x_2, x_3, \ldots]/(x_1*_qx_1=(4q+1)x_2) = \Z/(2)[x_1, x_3, x_4, \ldots]
$$ 
Consider the filtrations $$ \Z/(2)[x_1, x_3, x_4, \ldots] \supset (x_1) \supset (x_1^2) \supset \cdots$$ 
and 
$$(\varOmega^*_W/(2), *_q) \supset x_1 *_q \varOmega^*_W/(2) \supset x_1 *_q x_1 *_q \varOmega^*_W/(2) \supset \cdots$$
The homomorphism $\varphi$ preserves these filtrations, and therefore it is sufficient to prove that it induces an isomorphism on the associated quotients.

The quotient $(x_1^n)/(x_1^{n+1})$ has a basis consisting of monomials $x_1^n x_3^{i_3} x_4^{i_4} \cdots$.

Since $\partial(x_1)=2$, it follows from~\eqref{multw} that $x_1*_q a=x_1*a$ in the ring $(\varOmega^*_W/(2), *_q)$. Therefore, the ideal in $\varOmega^*_W$ generated by the $n$th power $x_1^{*_q n}$ with respect to the multiplication $*_q$ coincides with the ideal generated by the $n$th power $x_1^{* n}$ with respect to the standard multiplication. By virtue of formula~\eqref{W} the quotient $x_1^{*_q (n+1)} *_q \varOmega^*_W / x_1^{*_q n} *_q \varOmega^*_W = x_1^{* (n+1)} * \varOmega^*_W / x_1^{* n} * \varOmega^*_W$ has a basis consisting of monomials $x_1 ^{*n} * x_3 ^{* i_3} * x_4^{*i_4}\cdots$. In this quotient, the identity $x_1^{*_q n} *_q x_3 ^{*_q i_3} *_q x_4^{*_q i_4}\cdots = x_1^{*n} * x_3 ^{* i_3} * x_4^{ *i_4}\cdots$ holds, by formula~\eqref{multw}. Since the homomorphism $\varphi$ takes $x_1^n x_3^{i_3} x_4^{i_4}\cdots$ to $x_1^{*_q n} *_q x_3 ^{*_q i_3} *_q x_4^{*_q i_4}\cdots$, it is an isomorphism on the associated quotients of filtrations.

\smallskip
\textit{2) $\varphi \otimes \Z[1/2]$ is an isomorphism}
\smallskip

Formulas \eqref{W1/2} and \eqref{SU1/2} imply that $$\varOmega^*_W[1/2] = \Z[1/2][y_2, y_3, \ldots] \oplus x_1 \Z[1/2][y_2, y_3, \ldots],$$
where $\Z[1/2][y_2, y_3, \ldots] = \varOmega^*_{SU}[1/2]$, $y_2 = x_1 * x_1$, $y_{2i-1} = x_{2i-1}$ and $y_{2i} = x_{2i} - \frac{1}{2}x_1 y_{2i-1}$ for $i>1$. Furthermore, since $\partial$ vanishes on $\varOmega^*_{SU}[1/2]$, the multiplication $*_q$ is given by $$(a_1 + x_1 b_1)*_q(a_2 + x_1 b_2) = a_1 a_2 + x_1 (a_1 b_2 + b_1 a_2) + (4q+1)y_2 b_1 b_2$$ for $ a_i, b_i \in \Z[1/2][y_2, y_3, \ldots]$.
Hence, there is an isomorphism $$(\varOmega^*_W[1/2], *_q) = \Z[1/2][x_1, y_2, y_3, \ldots]/(x_1*_qx_1=(4q+1)y_2).$$

Therefore, the homomorphism $\varphi \otimes \Z[1/2]$ has the form
$$ \Z[1/2][x_1, x_2, \ldots]/(x_1*_qx_1=(4q+1)x_2) \to \Z[1/2][x_1, y_2, \ldots]/(x_1*_qx_1=(4q+1)y_2),$$
where $x_1 \mapsto x_1$, $x_2 \mapsto y_2$, $x_{2i-1} \mapsto y_{2i-1}$ and $x_{2i} \mapsto y_{2i} + \frac{1}{2}x_1 y_{2i-1}$ with $i>1$. It follows that $\varphi \otimes \Z[1/2]$ is an isomorphism.
\end{proof}

The projection $\pi_0$ defines a complex orientation $w=\pi_0(u)$ on $W^*$, where $u$ is the standard orientation of $\MU^*$. The complex orientation and multiplication $*_q$ determine the formal group law
\begin{equation*}\label{fgw}
F_W (u, v) = u + v + \sum _{i, j >0} \omega_{ij} *_q u^{*_q i} *_q v^{*_q j}.
\end{equation*}

\begin{theorem}\label{Wland}
The formal group law $F_W (u, v)$ over the ring $(\varOmega_W^*, *_q)$ is Landweber exact.
\end{theorem}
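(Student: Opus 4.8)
The plan is to verify the hypotheses of Theorem~\ref{Land} directly, one prime at a time. Let $\psi\colon\varOmega_U^*\to(\varOmega_W^*,*_q)$ be the ring homomorphism classifying $F_W$; then each $v_n$ of Theorem~\ref{Land} is $\psi$ applied to the corresponding coefficient of the universal $[p]$-series, and by the Corollary following the Proposition on isomorphism invariance it suffices to show that for every prime $p$ the sequence $(p,v_1,v_2,\dots)$ is regular in $\varOmega_W^*$.

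The element $p$ is a non-zero-divisor because, by Theorem~\ref{Wkring}, $\varOmega_W^*$ is a free abelian group in each degree, hence torsion-free; so it remains to prove that the images of $v_1,v_2,\dots$ form a regular sequence in $\varOmega_W^*/p$. Applying Lemma~\ref{simple} to $F_W$ over $R=\varOmega_W^*$ gives $v_n\equiv\psi(a_{p^n-1})\cdot(p^{p^n-1}-1)\bmod J_R^2$, so modulo $(p,J_R^2)$ one has $v_n\equiv-\psi(a_{p^n-1})$. Next I would determine $\psi(a_k)$ modulo decomposables: for $q=0$ the homomorphism $\psi$ is the Stong homomorphism $\pi_0$, which is surjective, so $\psi(a_k)$ is a unit multiple of $x_k$ modulo decomposables in every degree $-2k\ne-4$; and for general $q$ the same holds because, modulo decomposables, the product $*_q$ differs from $*_0$ only in degree $-4$ (the correction term $qx_2\,\partial a\,\partial b$ is decomposable unless $\deg a=\deg b=-2$), so $F_W$ has the same coefficients modulo decomposables outside degree $-4$ for all $q$. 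Since $p^n-1\ne2$ for $n\ge2$ (and also for $n=1$ unless $p=3$), this yields, modulo $p$, that each such $v_n$ is a unit times the indecomposable $x_{p^n-1}$ plus decomposables, the $x_{p^n-1}$ being pairwise distinct. The only coefficient not covered is $v_1$ when $p=3$: there $v_1\equiv8\psi(a_2)\bmod(3,J_R^2)$, and one checks from the explicit Stong orientation that $v_1$ is a non-zero element of the one-dimensional group $\varOmega_W^{-4}/3$ — a unit times the indecomposable $x_2$ when $3\mid4q+1$, and a unit times $x_1^2$ when $3\nmid4q+1$ (where the relation $x_1*_qx_1=(4q+1)x_2$ of Theorem~\ref{Wkring} forces $x_2$ to be decomposable modulo $3$).

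It remains to do the commutative algebra in $\varOmega_W^*/p$, whose shape is read off from Theorem~\ref{Wkring}: if $p\nmid4q+1$ (always so when $q=0$, and whenever $p=2$) the relation can be solved for $x_2$ and $\varOmega_W^*/p$ is a graded polynomial ring over $\Z/p$; if $p\mid4q+1$ (so $p$ is odd) it is $C[x_1]/(x_1^2)$ with $C=\Z/p[x_2,x_3,\dots]$ a graded polynomial ring. In the polynomial case, a sequence of homogeneous elements of positive degree which, up to units and lower-degree decomposables, is a family of distinct polynomial generators is automatically regular: by graded Nakayama it completes to a polynomial generating set, and equality of Hilbert series shows the resulting graded surjection is an isomorphism. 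This settles every prime except the case $p\mid4q+1$ and the exceptional $v_1$ at $p=3$ with $3\nmid4q+1$; in either of these, after using that the first relevant element ($v_1$) is a non-zero-divisor, one is reduced to a quotient $C[x_1]/(x_1^2)$ with $C$ a graded polynomial $\Z/p$-algebra and to a sequence $v_n=\alpha_n+x_1\beta_n$ whose constant parts $\alpha_n\in C$ have the ``distinct generators plus decomposables'' shape. For this I would prove: if $(\alpha_n)$ is a regular sequence in $C$ then $(\alpha_n+x_1\beta_n)$ is regular in $C[x_1]/(x_1^2)$. The proof is an elementary induction: from a relation $(\alpha_{m+1}+x_1\beta_{m+1})(g+x_1h)\in(x_1^2,\alpha_1+x_1\beta_1,\dots,\alpha_m+x_1\beta_m)$ one compares $x_1^0$- and $x_1^1$-parts and uses regularity of the $\alpha$'s in $C$, noting that $x_1\alpha_i\equiv0$ modulo $x_1^2$ and $\alpha_i+x_1\beta_i$, to conclude that $g+x_1h$ already lies in the ideal. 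Since the $\alpha_n$ form a regular sequence in $C$ by the polynomial-case argument, Theorem~\ref{Land} applies and $F_W$ is Landweber exact.

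The step I expect to be the main obstacle is controlling the classifying homomorphism $\psi$ modulo $p$ and decomposables — it does not preserve $s$-numbers, so one has to exploit the explicit ring structure of $\varOmega_W^*$ from Theorem~\ref{Wkring} — together with the correct bookkeeping of the low-degree coincidences at $p=2$ and $p=3$; the extra commutative-algebra input for the non-polynomial quotients $\varOmega_W^*/p$ (when $p\mid4q+1$) is the other place requiring care, though the auxiliary lemma above handles it cleanly.
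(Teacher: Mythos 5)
There is a genuine gap, and it sits exactly at the step you yourself flag as the main obstacle: the determination of $\psi(a_k)$ modulo $p$ and decomposables. Your argument rests on the claims that the classifying homomorphism $\psi$ of $F_W$ is the Stong projection $\pi_0$ and that it is surjective, so that $\psi(a_k)$ is a unit multiple of $x_k$ modulo decomposables for $k\ne 2$. Both claims are false. The Stong projection is $SU$-linear but not a ring homomorphism onto $(\varOmega_W^*,*_q)$, and $\psi$ is not surjective: the whole point of Theorem \ref{Fermat} is that the coefficients of $F_W$ do \emph{not} generate $\varOmega_W^*$ integrally, only after inverting Fermat primes and choosing a special orientation. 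The actual computation (the paper's route, via \cite[Lemmas 3.8, 3.11]{c-p23}, the formula $w_k\equiv a_k\bigl(1+(-1)^k(k+1)\bigr)$ modulo $J^2$, and a comparison of $s$-numbers using $s_k(x_k)=\pm m_km_{k-1}$) gives $\psi(a_k)\equiv -\tfrac{1+(-1)^k(k+1)}{m_{k-1}}\,x_k$ modulo decomposables; for example $\psi(a_6)\equiv -8x_6$, which is not a unit multiple of $x_6$ (and is even decomposable mod $2$). It is true that at the only indices you need, $k=p^n-1$, the coefficient $\tfrac{1\pm p^n}{m_{p^n-2}}$ is a unit mod $p$, but your proposal contains no valid argument for this; establishing it is precisely the computational core of the proof, and it cannot be obtained from surjectivity of a map that is not surjective. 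The same defect affects your treatment of the exceptional coefficient $v_1$ at $p=3$, which you defer to ``one checks'': the paper settles it by computing $\omega_{11}=-x_1$ and $\omega_{12}=-2qx_2$ exactly (using $J^2\cap\varOmega_W^{-4}=0$) and obtaining $v_1=(1-12q)x_2\equiv x_2 \bmod 3$.

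On the other hand, the scaffolding around this gap is sound and in places more careful than the paper: reducing to regularity mod $p$ via Lemma \ref{simple} (giving $v_n\equiv -\psi(a_{p^n-1})$ mod $(p,J_W*_qJ_W)$) matches the paper's strategy, your identification of the shape of $\varOmega_W^*/p$ from Theorem \ref{Wkring} is correct, and your auxiliary lemma that a sequence $(\alpha_n+x_1\beta_n)$ in $C[x_1]/(x_1^2)$ is regular whenever $(\alpha_n)$ is regular in $C$ is true and usefully fills in the commutative-algebra details (the case $p\mid 4q+1$, and the $p=3$ bookkeeping) that the paper passes over in one sentence. To repair the proof you must replace the $\pi_0$-surjectivity shortcut by an actual computation of $F_W$ modulo decomposables — either quoting \cite{c-p23} as the paper does, or deriving the coefficient $1+(-1)^k(k+1)$ directly — and then verify it is a unit mod $p$ when $k=p^n-1$.
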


\begin{corollary}
For any $SU$-bilinear multiplication on the theory $W^*$, there is a natural isomorphism $W_*(-) = \MU_*(-)\otimes_{\varOmega^U_*}\varOmega^W_*$ and a ring isomorphism $W^*(-)=\MU^*(-)\otimes_{\varOmega_U^*}\varOmega^*_W$ for finite cellular complexes.
\end{corollary}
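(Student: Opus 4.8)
The plan is to obtain the statement directly from Theorem~\ref{Wland} and the Landweber exact functor theorem (Theorem~\ref{Land}); no new computation is needed, and the work lies entirely in invoking the correct instance of the general machinery. First I would fix an $SU$-bilinear multiplication $*_q$ and record that $(W^*, *_q)$, equipped with the complex orientation $w = \pi_0(u)$, is a complex-oriented multiplicative cohomology theory whose coefficient ring is $(\varOmega^*_W, *_q)$ (Theorem~\ref{Wkring}) and whose formal group law is $F_W$. I would also note that the $\varOmega^*_U$-module structure on $\varOmega^*_W$ occurring in the asserted tensor products is precisely the one given by the homomorphism $\varOmega^*_U \to \varOmega^*_W$ classifying $F_W$, which is the module structure appearing in Theorem~\ref{Land}, so the two tensor products match.

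By Theorem~\ref{Wland}, $F_W$ is Landweber exact, so the corollary to Theorem~\ref{Land} applies verbatim to $h^* = (W^*, *_q)$ and yields a natural isomorphism of multiplicative cohomology theories $W^*(-) \simeq \MU^*(-)\otimes_{\varOmega_U^*}\varOmega^*_W$ on finite cell complexes together with an isomorphism of the corresponding homology theories $W_*(-) \simeq \MU_*(-)\otimes_{\varOmega^U_*}\varOmega^W_*$. Since the argument is uniform in $q$, this covers every $SU$-bilinear multiplication. If one prefers a self-contained argument, the comparison map can be made explicit: the orientation $w = \pi_0(u)$ determines a ring map $\MU \to W$ (with the $*_q$-product on $W$), hence a multiplicative natural transformation $\MU_*(X) \to W_*(X)$ and, after applying $-\otimes_{\varOmega^U_*}\varOmega^W_*$, a transformation $\MU_*(X)\otimes_{\varOmega^U_*}\varOmega^W_* \to W_*(X)$ which is the identity on $X=\mathrm{pt}$. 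Both sides are homology theories --- the source because $F_W$ is Landweber exact --- so a cellular induction with the five lemma promotes the point-level isomorphism to an isomorphism on all finite complexes, and dualizing gives the cohomology statement; multiplicativity is inherited from $\MU\to W$.

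The main, and essentially only, obstacle is the structural input that $(W^*,*_q)$ genuinely is a homotopy-commutative, homotopy-associative complex-oriented ring cohomology theory carrying $F_W$ as its formal group law, so that applying the general corollary is legitimate; this is the content of the constructions in~\cite{buch72} and~\cite{c-p23}. Granting it, the corollary follows from Theorem~\ref{Wland} with no further effort.
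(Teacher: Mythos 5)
Your proposal is correct and matches the paper's (implicit) argument: the corollary is stated without proof precisely because it follows immediately by applying the earlier corollary of the Landweber Exact Functor Theorem to the complex-oriented theory $(W^*, *_q)$ once Theorem~\ref{Wland} establishes Landweber exactness of $F_W$. Your additional sketch of the comparison map and cellular induction is the standard content behind that corollary and is consistent with the paper.
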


\begin{proof}[Proof of Theorem \ref{Wland}.] By Theorem \ref{Wkring} we have $$(\varOmega^*_W, *_q) = Z[x_1, x_2, x_3, \ldots]/(x_1*_qx_1=(4q+1)x_2).$$ Denote by $J_W$ the ideal of elements of nonzero degree in $\varOmega^*_W$ and by $J_W *_q J_W \subset \varOmega_W^*$ the ideal of decomposable elements in the ring $(\varOmega_W^*, *_q)$. By $*$ we denote the standard multiplication $*_0$ defined by the Stong projection.

If we consider the groups $\varOmega_W^*$ as subgroups in $\varOmega_U^*$, then by \cite[Lemma 3.8, Lemma 3.11]{c-p23} we have the following equation

\begin{equation*}\label{FmodJ}
F_W (u, v) = u + v + \sum \limits_{k \ge 1} w_k \frac{((u+v)^{k+1}-u^{k+1}-v^{k+1})}{m_k} \mod {J^2},
\end{equation*}
where
\begin{multline}\label{wmodJ}
w_1=-x_1, \quad w_2= -2q x_1 * x_1 \mod {J^2},\\ w_{k} = a_k (1+(-1)^k (k+1)) \mod {J^2}\quad \text{ for }k>2,
\end{multline}
and $a_k$ is a polynomial generator of the ring $\varOmega^*_U$ with $s_k(a_k)=-m_k$.

It is clear that $w_1 = \omega_{11}$. Moreover, by formula \eqref{W} we get that $\varOmega_W^{-4} = \Z\langle x_1*x_1\rangle = \Z\langle \C P^1*\C P^1\rangle$. Now~\eqref{mult} implies $\C P^1*\C P^1 = 9 (\C P^1)^2 - 8\C P^2$ in $\varOmega_U^*$. Hence, there is an indecomposable part in the generator of $\varOmega_W^{-4}$. It follows that $J^2\cap\varOmega^{-4}_W = 0$ and therefore $w_2= -2q x_1 *x_1$. Hence, $\omega_{12}=-2q x_1 * x_1=-2qx_2$.

Expanding the $n$th power in the formal group law $F_W(u,v)=u+v+\sum \omega_{ij}u^iv^j$ up to the third term we get
\begin{equation*}\label{nu}
  [n]_W(u) = nu + \frac{n(n-1)}{2} \omega_{11}u^2 + \Bigl(\frac{n(n-1)(n+1)} {3}\omega_{12}+\frac{n(n-1)(n-2)}{6}\omega_{11}^2\Bigr)u^3 + \cdots
\end{equation*}
Substituting $\omega_{11} = -x_1$ and $\omega_{12}=-2q x_2$ here we obtain
\begin{multline}\label{nuW}
[n]_W(u) = nu + \frac{n(n-1)}{2} x_1 u^2\\ + \Bigl(-2q\frac{n(n-1)(n+1)}{3}x_2+\frac{n(n-1)(n-2)}{6}x_1*_qx_1\Bigr)u^3 + \cdots \\
=nu + \frac{n(n-1)}{2} x_1 u^2 + \Bigl ((4q+1)\frac{n(n-1)(n-2)}{6} -2q\frac{n(n-1)(n+1)}{3} \Bigr) x_2 u^3 + \cdots
\end{multline}

Next we calculate the higher terms in the expansion of
$[n]_W(u)$ modulo decomposables in $(\varOmega^W_*, *_q)$. By Lemma~\ref{simple}, 
$$F_W(u, v)=u+v+\sum \limits_{k \ge 1} r_k \frac{((u+v)^{k+1}-u^{k+1}-v^{k+1})}{m_k} \mod J_W *_q J_W$$ for some $r_k \in \varOmega_W^{-2k}$.
Since in dimensions $>4$ elements from $J_W*_q J_W$ are decomposable in $\varOmega^*_U$, by formula \eqref{wmodJ} we have $s_k(r_k)=s_k(w_k)=s_k(a_k (1+(-1)^k (k+1)))=-m_k(1+(-1)^k (k+1))$ for $k>2$.
On the other hand, by Theorem \ref{Wkring} we have $r_k=\lambda_k x_k \mod J_W*_qJ_W$ for $\lambda_k \in \Z$. Hence, $-m_k(1+(-1)^k (k+1))=s_k(r_k)=\lambda_k s_k(x_k)=\lambda_k m_k m_{k-1}$. As a result, we obtain that $r_k = -\frac{1+(-1)^k (k+1)}{m_{k-1}}x_k\mod J_W*_qJ_W$ (one can check that the coefficient is always an integer).

For the $n$th power, Lemma \ref{simple} gives
\begin{equation*}\label{nuWmod}
[n]_W(u)=nu+\sum \limits_{k \ge 1} r_k \frac{(nu)^{k+1}-nu^{k+1}}{m_k} \mod {J_W *_q J_W}.
\end{equation*}

Combining this with the formula \eqref{nuW}, we obtain that the coefficient for $u^{k+1}$ in the series $[n]_W(u)$ is
\begin{equation}\label{coeff}
\begin{cases}
\hspace{20ex} n &\text{for $k=0$,}\\
\hspace{17ex}-\frac{n(n-1)}{2}x_1 &\text{for $k=1$},\\
\hspace{1ex}\bigl ((4q+1)\frac{n(n-1)(n-2)}{6} -2q\frac{n(n-1)(n+1)}{3} \bigr)x_2 &\text{for $k=2$,}\\
-\frac{(n^{k+1}-n)}{m_k} \frac{(1+(-1)^k(k+1))}{m_{k-1}} x_k\mod{J_W*_qJ_W} &\text{for $k>2$.}
\end{cases}
\end{equation}

Note that substituting $k=1$ in the expression for $k>2$ in~\eqref{coeff} gives $\frac{n(n-1)}{2}x_1$, which coincides with the formula for $k=1$ up to a sign.

Now we can check the regularity of the sequence $(p, v_1, \ldots)$ from the statement of Landweber's theorem.

Fix a prime $p$. Then for the elements $v_n$, except for the case $v_1$ for $p=3$, we obtain from~\eqref{coeff} that 
\[
  v_n = \pm\frac{(p^{p^n}-p)}{p}\frac{(1\pm p^n)}{m_{p^n-2}} x_{p^n-1} \mod{J_W*_qJ_W} =\varepsilon_n x_{p^n-1} \mod{J_W*_qJ_W},
\]  
where $\varepsilon_n \not \equiv 0 \mod p$.
This implies the regularity of the sequence $(p, v_1, \ldots)$ for $p\ne 3$.

For the case of $p=3$, we note that $v_1$ is a coefficient for $u^3$ in $[3]_W(u)$, that is, $(4q+1 -16q)x_2 = (1-12q)x_2 \equiv x_2 \mod 3$.
Thus, the sequence $(3, v_1, \ldots)$ is also regular.
\end{proof}

Let $\P$ be the set of primes of the form $p=2^k+1$ (Fermat primes) greater than $3$. Consider the theory $W^*[\P^{-1}]$ obtained from $(W^*, *_q)$ by inverting all $p \in \P$.

\begin{theorem}\label{Fermat}
For the theory $W^*[\P^{-1}]$, there exists a complex orientation such that the coefficients of the corresponding formal group law generate the whole ring $(\varOmega_W^*[\P^{-1}], *_q)$ (as a $\Z[\P^{-1}]$-algebra).
\end{theorem}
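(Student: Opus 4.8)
The plan is to start from the standard orientation $w=\pi_0(u)$ with its formal group law $F_W$ over $(\varOmega^*_W,*_q)$ and then \emph{reparametrize}: replace $w$ by $\bar w=g(w)$ for a power series $g(u)=u+\sum_{k\ge1}b_ku^{k+1}$ with $b_k\in\varOmega^{-2k}_W[\P^{-1}]$ to be chosen. Every complex orientation of $W^*[\P^{-1}]$ arises in this way (up to a sign, which is irrelevant for what follows), since $W^*(\C P^\infty)$ is a power series ring over $W^*(pt)$ in any fixed orientation. Writing $(\varOmega^*_W,*_q)=\Z[x_1,x_2,\dots]/(x_1*_qx_1=(4q+1)x_2)$ as in Theorem~\ref{Wkring}, I want the $\Z[\P^{-1}]$-subalgebra $R'\subset\varOmega^*_W[\P^{-1}]$ generated by the coefficients of the new formal group law $\bar F_W$ to be everything; as $R'$ is graded, this means $x_k\in R'$ for all $k$, which I would prove by induction on the degree. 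For the step at a given $k\ge3$ one may assume $x_1,\dots,x_{k-1}\in R'$, so that $R'$ already contains every decomposable element of degree $2k$; since $\varOmega^{-2k}_W=\Z x_k\oplus(\text{decomposables})$ for $k\ge3$, and since, modulo decomposables, the degree-$2k$ coefficients $\bar\alpha_{ij}$ (with $i+j=k+1$, $1\le i\le k$) of $\bar F_W$ equal $\binom{k+1}{i}m_k^{-1}\bar r_k$ where $\bar r_k$ is the corresponding coefficient of the Lemma~\ref{simple} expansion, and $\gcd_i\binom{k+1}{i}/m_k=1$, it suffices that $\bar r_k$ be a \emph{unit} of $\Z[\P^{-1}]$ times $x_k$ modulo $J_W*_qJ_W$.

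The next ingredient is the effect of reparametrization on $\bar r_k$. Writing $F_W=u+v+\sum_kr_k\frac{(u+v)^{k+1}-u^{k+1}-v^{k+1}}{m_k}\bmod J_W*_qJ_W$ as in Lemma~\ref{simple}, and likewise $\bar F_W$ with coefficients $\bar r_k$, a short computation over $\Q$ with logarithms — using $\log_{\bar F_W}=\log_{F_W}\circ g^{-1}$ and the fact that the coefficient of $u^{k+1}$ in $\log_{F_W}$ is $-r_k/m_k$ modulo decomposables — yields
\[
\bar r_k\equiv r_k+m_kb_k\pmod{J_W*_qJ_W}.
\]
Since for $k\ge3$ the reduction $\varOmega^{-2k}_W[\P^{-1}]\to\Z[\P^{-1}]\,x_k$ modulo decomposables splits, I may take $b_k=\mu_kx_k$ with $\mu_k\in\Z[\P^{-1}]$ arbitrary. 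Recalling from the proof of Theorem~\ref{Wland} that $r_k\equiv\lambda_kx_k$ modulo decomposables with $\lambda_k=-\bigl(1+(-1)^k(k+1)\bigr)/m_{k-1}$ — so $\lambda_k=k/m_{k-1}$ for $k$ odd and $\lambda_k=-(k+2)/m_{k-1}$ for $k$ even — the whole problem reduces to: for every $k\ge3$, find $\mu_k\in\Z[\P^{-1}]$ with $\lambda_k+m_k\mu_k\in\Z[\P^{-1}]^\times$.

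This last step is the combinatorial heart, and it is exactly here that the Fermat primes are forced. Recall $m_k=p$ if $k+1=p^s$, and $m_k=1$ otherwise. If $m_k=1$ take $\mu_k=1-\lambda_k$. If $m_k=2$, then $k$ is odd, whence $m_{k-1}$ is odd and $\lambda_k=k/m_{k-1}$ is odd, and $\mu_k=(1-\lambda_k)/2\in\Z$ works. If $m_k=p$ is an odd prime with $p\in\P$, then $p$ is invertible in $\Z[\P^{-1}]$ and $\mu_k=(1-\lambda_k)/p$ works. The only remaining case is $m_k=p$ an odd prime with $p\notin\P$, i.e.\ $k+1=p^s$ with $p$ odd and not a Fermat prime $>3$; then $k$ is even, and I would split on whether $k$ is a power of $2$. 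If $k$ is not a power of $2$ then $m_{k-1}=1$, so $\lambda_k=-(k+2)=-(p^s+1)\equiv-1\pmod p$ and $\mu_k=p^{s-1}$ gives $\lambda_k+m_k\mu_k=-1$. If $k=2^t$ (necessarily $t\ge2$), then $k+1=2^t+1=p^s$ is an odd prime power; an elementary factorization argument (or the Catalan--Mihailescu theorem) shows that either $s=1$, in which case $2^t+1$ is prime, hence a Fermat prime $>3$, contradicting $p\notin\P$; or $(t,p,s)=(3,3,2)$, in which case $m_{k-1}=m_7=2$ and $\lambda_8=-5$ is already a unit, so $\mu_8=0$ works. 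Thus a suitable $\mu_k$ exists for every $k\ge3$. The genuine role of $\P$ is seen in the case $m_k=p\in\P$: when $k=2^t$ and $2^t+1$ is a Fermat prime $>3$ one has $\lambda_{2^t}=-(2^{t-1}+1)$, and $-(2^{t-1}+1)\not\equiv\pm1\pmod{2^t+1}$ (e.g.\ $\lambda_4=-3$, $m_4=5$), so no \emph{integer} $\mu_k$ makes $\lambda_k+m_k\mu_k$ a unit — one must invert $2^t+1$, and these are precisely the primes of $\P$.

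It remains to handle the low degrees and assemble the argument. Take $b_1=b_2=0$, so that the degree-$2$ and degree-$4$ coefficients of $\bar F_W$ coincide with those of $F_W$, namely $\bar\alpha_{11}=\omega_{11}=-x_1$ and $\bar\alpha_{21}=\bar\alpha_{12}=\omega_{12}=-2qx_2$; then $x_1\in R'$, and since $x_1*_qx_1=(4q+1)x_2\in R'$ and $\gcd(2q,4q+1)=1$, also $x_2\in R'$. With $b_k=\mu_kx_k$ for $k\ge3$ chosen as in the previous step, the degree induction of the first paragraph yields $x_k\in R'$ for all $k$, hence $R'=\varOmega^*_W[\P^{-1}]$ and $\bar w=g(w)$ is the required orientation. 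The main obstacle is the third step: identifying exactly which $\lambda_k$ fail to be units and cannot be corrected for free (this happens only when $m_k>1$), and checking that inverting the Fermat primes $>3$ — and nothing less — repairs all of them; the subcase $k=2^t$ is the delicate point, and it is where the number-theoretic input about $2^t+1$ being prime is used.
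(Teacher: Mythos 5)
Your argument is correct and lands on exactly the same combinatorial core as the paper, but you reach it by a partly different route. The paper invokes \cite[Lemmas 3.8 and 3.11]{c-p23}: for an arbitrary orientation the gcd of the $s$-numbers of the degree-$2k$ coefficients equals $m_k\bigl(1+(-1)^k(k+1)+c_km_km_{k-1}\bigr)$ with $c_k$ ranging over $\Z$ (over $\Z[\P^{-1}]$ for the localized theory), and then solves $1+(-1)^k(k+1)+c_km_km_{k-1}=\varepsilon_km_{k-1}$ case by case. You instead parametrize orientations as reparametrizations $\bar w=g(w)$ of the Stong orientation and compute via logarithms that the Lemma~\ref{simple} coefficient changes by $\bar r_k\equiv r_k+m_kb_k$ modulo decomposables; combined with $r_k\equiv-\frac{1+(-1)^k(k+1)}{m_{k-1}}x_k$ from the proof of Theorem~\ref{Wland} and the fact that $\gcd_i\binom{k+1}{i}/m_k=1$, this reduces the theorem to finding $\mu_k\in\Z[\P^{-1}]$ with $\lambda_k+m_k\mu_k$ a unit, which after dividing by $m_{k-1}$ (and setting $c_k=-\mu_k$) is literally the paper's equation~\eqref{=}. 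Your case analysis, organized by $m_k$ rather than $m_{k-1}$, covers the same cases, with the same exceptional value $k=8$ resolved by the unit $5$, and your handling of $x_1,x_2$ (take $b_1=b_2=0$ and use $\gcd(4q+1,2q)=1$) matches the paper's choice $l=0$, $w_2=0$. What your route buys is a self-contained derivation of the orientation-dependence of the leading indecomposable coefficients instead of quoting \cite[Lemma 3.11]{c-p23} (you still rely on the paper's computation of $r_k$, hence on~\eqref{wmodJ}); the small price is the passage through rational logarithms, which is harmless here because the indecomposable quotient of $\varOmega^{-2k}_W[\P^{-1}]$ is a free $\Z[\P^{-1}]$-module for $k\ge3$.

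One aside in your write-up is unjustified: the claim that for $k=2^t$ with $2^t+1\in\P$ no integer $\mu_k$ makes $\lambda_k+m_k\mu_k$ a unit of $\Z[\P^{-1}]$. Your congruence only excludes the values $\pm1$, i.e.\ units of $\Z$; for instance at $k=4$ the integer choice $\mu_4=4$ gives $-3+5\cdot4=17\in\P$, which is a unit of $\Z[\P^{-1}]$. Whether \emph{integral} orientations suffice to generate $\varOmega_W^*[\P^{-1}]$ is precisely the question the paper leaves open in its concluding Remark, so you should not assert its negative in passing. Fortunately nothing in your existence proof depends on this aside.
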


Theorem \ref{Fermat} is stated in \cite{buch72} for the standard multiplication given by the Stong projection. Below we present its proof based on the results obtained in \cite{c-p23}.

\begin{proof}
We consider the theory $(W^*, *_q)$ with an arbitrary complex orientation. By Theorem~\ref{Wkring} we have $(\varOmega^*_W, *_q)=\Z[x_1, x_2, \ldots]/(x_1 *_q x_1 = (4q+1)x_2)$. By \cite[Lemma~3.8]{c-p23}, for the formal group law $\widetilde F_W(u, v) = u + v + \sum \widetilde \omega_{ij}*_q u^{*_qi} * v^{ *_qj}$ defined by a complex orientation of~$W^*$, we have $\widetilde \omega_{11} = (2 l +1)x_1$ for $l \in \Z$ and $\widetilde \omega_{12 } = 3w_2-2qx_2$ for $w_2 \in \varOmega^{-4}_W$, where $l$ and $w_2$ are defined by the complex orientation and can take arbitrary values. Therefore, choosing an orientation such that $l=0, \, w_2=0$, we get $x_1=\widetilde \omega_{11}, \, x_2 = \widetilde \omega_{11}*_q\widetilde \omega_{11} +2\widetilde \omega_{12}$.

It remains to prove that for some orientation the generators $x_k$, $k>2$, can be expressed via the coefficients of~$\widetilde F_W$. To do this, it is necessary and sufficient to show that there exists an integral linear combination of $\widetilde \omega_{ij}, \, i+j=k+1>3$, with the $s$-number equal to $s_k(x_k)=m_k m_{k-1 }$ up to elements invertible in $\Z[\P^{-1}]$. By \cite[Lemma~3.11]{c-p23} we have
\begin{equation}\label{gcd}
      \gcd \bigl\{ s_{i+j-1}(\widetilde \omega_{ij}) \mid i+j = k+1\bigr\}
  = m_k \bigl ( 1+(-1)^k (k+1)+ c_k m_km_{k-1} \bigr ), \quad k>2,
\end{equation}
where $c_k$ can be arbitrary integers depending on the orientation.

The same argument as in \cite[Lemma~3.11]{c-p23} shows that for formal group laws over the localized rings $W^*[\P^{-1}]$ the equality~\eqref{gcd} holds with $c_k \in \Z[\P^{-1}]$.

Thus, to prove the theorem \ref{Fermat} it is necessary and sufficient to prove that for every $k>2$ there exists $c_k \in \P$ such that $$m_k \bigl ( 1+(-1)^ k (k+1)+ c_k m_km_{k-1} \bigr)= \varepsilon_k m_k m_{k-1},$$ where $\varepsilon_k$ is an invertible element of the ring $ \Z [\P^{- 1}]$, or equivalently
\begin{equation}\label{=}
    1+(-1)^k (k+1)+ c_k m_km_{k-1} = \varepsilon_k m_{k-1}.
\end{equation}

Let us show that for $k \ne 8$ the number $c_k$ can be chosen so that equality~\eqref{=} holds with $\varepsilon_k=1$. In this case $$c_k = \frac{m_{k-1}-1 - (-1)^k(k+1)}{m_k m_{k-1}}$$ and we need to show that $c_k \in \Z[\P^{-1}]$.

If $m_{k-1}=1$ then $c_k=(-1)^{k+1}\frac{k+1}{m_k} \in \Z$ since $m_k$ always divides $ k+1$.

If $m_{k-1}=p$ is an odd prime, then $k=p^s$. Hence $m_k=1$ or~$2$. Then $c_k=\frac{p-1+p^s+1}{p m_k}=\frac{p^{s-1}+1}{m_k} \in \Z$, since $p^{ s-1}+1$ is even.

If $m_{k-1}=2$, then $k=2^{\ell}$ and $c_k = \frac{2-1-2^{\ell}-1}{2m_k} = -\frac{1}{m_k}2^{\ell-1}$. In this case, either $m_k=1$, in which case $c_k \in \Z$, or $m_k = p >2$. In the latter case, we have that $k+1=2^{\ell}+1=p^s$. Then either $k=8=2^3=3^2-1$ or $\ell=2^n$ and $s=1$ (see, for example, \cite[Lemma 3.15]{c-p23} ). In the latter case, we have $p=2^{2^n}+1$ is a Fermat prime, and $p=k+1>3$, and hence $p \in \P$. Then $c_k = -\frac{1}{p}2^{2^n-1} \in \Z[\P^{-1}]$.

It remains to consider the case $k=8$. Then \eqref{=} becomes $$1+9+ 6c_k = 2\varepsilon_k .$$ Setting $c_k=0$, we see that the equality holds for $\varepsilon_k = 5 \in \P$.
\end{proof}

\begin{remark}
In \cite[Theorem 3.13]{c-p23} it is proved that the ring ($\varOmega_W^*[1/2], *_q)$ is generated by the coefficients of the formal group law $F_W$ for some orientation of the theory $W^*$. On the other hand, the statement of Theorem \ref{Fermat} is only about orientations of the localized theory, which corresponds to non-integer values of $c_k$. The author doesn't know if it is possible to generate the localized ring $\varOmega_W^*[\P^{-1}]$ by the coefficients of the formal group law corresponding to the ``integral'' orientation of the theory $W^*$, that is, if there exist integers $c_k$ such that $1+(-1)^k(k+1)+c_k m_k m_{k-1}=\varepsilon_k m_{k-1}$ for invertible elements $\varepsilon_k \in \Z[\P^{-1}]$.
\end{remark}

\end{document}